\DeclareSymbolFont{cyrletters}{OT2}{wncyr}{m}{n}
\DeclareMathSymbol{\sha}{\mathalpha}{cyrletters}{"58}
 \newtheorem{thm}{Théorème}[section]
 \newtheorem*{thm*}{Théorème}
 \newtheorem{lem}[thm]{Lemme}
 \newtheorem{prop}[thm]{Proposition}
 \theoremstyle{definition}
 \theoremstyle{remark}
 \theoremstyle{remark}
 \newtheorem{rem}[thm]{\textbf{Remarque}}
 \numberwithin{equation}{subsection}
 \newcommand{\To}{\longrightarrow}
 \newcommand{\im}{\textup{Im}}
 \newcommand{\Br}{\textup{Br}}
  \newcommand{\Spec}{\textup{Spec}}
    \newcommand{\inv}{\textup{inv}}
        \newcommand{\pr}{\textup{pr}}
  \newcommand{\A}{\textbf{A}}
   \newcommand{\F}{\mathbb{F}}
 \newcommand{\E}{\mathcal{E}}
 \newcommand{\BM}{Brauer\textendash Manin}
 \newcommand{\oBM}{obstruction de Brauer\textendash Manin}
 \newcommand{\Q}{\mathbb{Q}}
 \newcommand{\Z}{\mathbb{Z}}
 \renewcommand{\O}{\mathcal{O}}
\begin{document}

\title[]
{Approximation forte sur un produit de variétés abéliennes épointé en des points de torsion}

\author{ Yongqi Liang}

\address{Yongqi LIANG
\newline University of Scinece and Technology of China,
\newline School of Mathematical Sciences,
\newline 96 Jinzhai Road,
\newline  230026 Hefei, Anhui, China
 }

\email{yqliang@ustc.edu.cn}

%\thanks{This work was completed with the help of Prof.  .}

\thanks{\textit{Mots clés} : approximation forte, obstruction de Brauer\textendash Manin, variétés abéliennes, points de torsion, points entiers}

\thanks{\textit{MSC 2010} : 11G35, 14G25.     \today}

%%% ----------------------------------------------------------------------

%%% ----------------------------------------------------------------------
\maketitle

%\tableofcontents

%%% ----------------------------------------------------------------------

\begin{abstract}
Considérons l'approximation forte pour les variétés algébriques définies sur un corps de nombres $k$.
Soit  $S$ un ensemble fini de places de $k$ contenant les places archimédiennes.
Soit $E$ une courbe elliptique de rang de Mordell\textendash Weil non nul et soit $A$ une variété abélienne de dimension strictement positive et de groupe de Mordell\textendash Weil fini. Pour  un ensemble fini quelconque $\mathfrak{T}$ de points de torsion de $E\times A$,  notons par $X$ son complémentaire. En supposant la finitude de $\sha(E\times A)$, nous démontrons que $X$ vérifie l'approximation forte avec l'\oBM\ hors de $S$ si et seulement si la projection de $\mathfrak{T}$ sur $A$ ne contient aucun point $k$-rationnel. 
\end{abstract}

\begin{center}\footnotesize\textbf{STRONG APPROXIMATION FOR PRODUCTS OF ABELIAN VARIETIES\ \\  PUNCTURED AT TORSION POINTS}
\end{center}
\renewcommand{\abstractname}{Abstract}
\begin{abstract}
Consider strong approximation for algebraic varieties defined over a number field $k$.
Let $S$ be a finite set of places of $k$ containing all archimedean places. Let $E$ be an elliptic curve  of positive Mordell\textendash Weil rank and let $A$ be an abelian variety of positive dimension and of finite Mordell\textendash Weil group. For an arbitrary finite set  $\mathfrak{T}$ of torsion points of $E\times A$, denote by $X$ its complement. Supposing the finiteness of $\sha(E\times A)$, we prove that $X$ satisfies strong approximation with Brauer\textendash Manin obstruction off $S$ if and only if the projection of $\mathfrak{T}$ to $A$ contains no $k$-rational points.
\end{abstract}

\normalsize

\section{Introduction}\label{intro}
Soit $k$ un corps de nombres. On s'intéresse à étudier les points entiers d'une $k$-variété lisse. On note par $X^{c}$ une compactification lisse de $X$. En général, même si on connaît le comportement des points rationnels de $X^{c}$,  on ne peut qu'en déduire très peu d'informations sur les points entiers de $X$.

Depuis très longtemps,  on s'intéresse à la densité de Zariski de l'ensemble des points entiers. On s'intéresse également à l'approximation forte \textemdash \ la densité de l'ensemble des points entiers plongé  dans l'ensemble des points adéliques. Dans la littérature, de variétés de types divers ont été  étudiées.  Dans cet article, on se limite aux ouverts de certaines variétés abéliennes.

Dans l'article de B. Hassett et Y. Tschinkel \cite{HassettTschinkel}, ils ont discuté la question de  la densité potentielle de Zariski des points entiers.  Ils ont démontré que tout ouvert d'un produit de variétés abéliennes simples dont le complémentaire a grand codimension vérifie cette densité potentielle. A. Kresch et Y. Tschinkel \cite{KreschTschinkel} ont aussi les résultats numériques qui soutiennent une réponse affirmative à cette question pour certaines jacobiennes épointées en un point rationnel.

D'un autre point de vue, dans l'article récent \cite{CLX} de Y. Cao, F. Xu et l'auteur, nous avons étudié la densité des points entiers au sens de la topologie adélique. Nous avons répondu partiellement à une question de O. Wittenberg, cf. \cite[Problem 6]{AIM2014} et \cite[Question 2.11]{WittSurvey}.
Parmi nos résultats, nous avons montré  le théorème suivant.
\begin{thm*}[{\cite[Corollary 8.2]{CLX}}]
Soit $E$ une $k$-courbe elliptique de rang de Mordell\textendash Weil non nul. Soit $A$ une $k$-variété abélienne de dimension strictement positive et de rang de Mordell\textendash Weil zéro. 

Si $O$ désigne l'élément neutre de $E\times A$, alors  l'ouvert  $(E\times A)\setminus O$ ne vérifie pas  l'approximation forte avec l'\oBM\  hors de $\infty$.
\end{thm*}
Sa démonstration est basée sur une généralisation d'un argument de D. Harari et J. F. Voloch \cite{HV10} et sur une idée qui remonte à B. Poonen \cite{Poonen}.

Le but de cet article est de généraliser ce théorème. Nous gardons les même hypothèses sur $E$ et $A$,  considérons  le complémentaire $X$ d'un ensemble quelconque de points de torsion de $E\times A$. En améliorant l'ancien argument, nous trouvons une description complète pour la propriété d'approximation forte sur $X$.

\begin{thm}\label{thmIntro}
Soit $k$ un corps de nombres.  Soit $S$ un ensemble fini de places de $k$ contenant les places archimédiennes.

Soit $E$ une $k$-courbe elliptique de rang de Mordell\textendash Weil non nul. Soit $A$ une $k$-variété abélienne de dimension strictement positive et de rang de Mordell\textendash Weil zéro.  Pour $\mathfrak{T}$ un ensemble quelconque de points de torsion de $E\times A$, on note par $X$ son complémentaire.

Si la projection de $\mathfrak{T}$ sur $A$ contient un point $k$-rationnel, alors $X$ ne vérifie pas  l'approximation forte avec l'\oBM\  hors de $S$.
En supposant la finitude du groupe de Tate\textendash Shafarevich $\sha(E\times A)$, si la projection $\mathfrak{T}$ sur $A$ ne contient pas de points $k$-rationnels, alors $X$ vérifie  l'approximation forte avec l'\oBM\  hors de $S$.
\end{thm}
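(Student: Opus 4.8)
The plan is to split into the two implications announced in the statement. For the \emph{negative} direction, suppose the projection $\mathrm{pr}_A(\mathfrak{T})$ contains a $k$-rational point $a_0\in A(k)$. After translating by $-a_0$ (which is an automorphism of $E\times A$ defined over $k$ preserving the class of torsion subschemes), I may assume $\mathfrak{T}$ contains a torsion point of the form $(t_0,0)$ with $t_0\in E(k)_{\mathrm{tor}}$. Then $X=(E\times A)\setminus\mathfrak{T}$ sits inside $(E\times A)\setminus\{(t_0,0)\}$, and I want to reproduce the obstruction of \cite[Corollary 8.2]{CLX} on this larger open set and pull it back. Concretely, the fibration $\mathrm{pr}_E\colon X\to E$ (or rather its restriction over $E\setminus\{t_0\}$) has as generic fibre an open of $A$; the point is that the étale cohomology / Brauer-group argument of Harari–Voloch–Poonen type produces, from a non-torsion point of $E(k)$ reducing appropriately, an adelic point on $X$ orthogonal to $\mathrm{Br}(X)$ that is not approximated by integral points. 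The key input I will isolate is that removing \emph{more} torsion points only shrinks the integral points further, so any failure of strong approximation for the one-point-punctured variety, witnessed by an adelic point whose coordinates avoid $\mathfrak{T}$, transfers to $X$; one must check the Brauer pairing is compatible under the open immersion $X\hookrightarrow (E\times A)\setminus\{(t_0,0)\}$, which is standard functoriality of the Brauer–Manin pairing.

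For the \emph{positive} direction, assume $\sha(E\times A)$ is finite and $\mathrm{pr}_A(\mathfrak{T})$ contains no $k$-rational point. The strategy is fibration over $A$ via $f=\mathrm{pr}_A\colon X\to A$. For each $a\in A(k)$ the fibre $X_a$ is $E\setminus\mathfrak{T}_a$ where $\mathfrak{T}_a=\{t\in E_{\mathrm{tor}} : (t,a)\in\mathfrak{T}\}$ is a finite (possibly empty) set of torsion points of $E$; crucially the hypothesis guarantees $\mathfrak{T}_a$ can be nonempty only when $a\notin A(k)$, so \emph{over every rational point of the base the fibre is either all of $E$ or $E$ minus finitely many torsion points none of which is defined over $k$}. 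I will want the statement that such a fibre $E\setminus\mathfrak{T}_a$ satisfies strong approximation with Brauer–Manin obstruction off $S$: this is exactly the known case of an elliptic curve punctured at torsion points with no $k$-rational puncture, which follows from the finiteness of $\sha(E)$ together with the description of the Brauer group of $E$ and the Cassels–Tate / descent machinery (the complementary half of \cite[Corollary 8.2]{CLX} in the rank-$\ge 1$, good-puncture case). Since $A$ has finite Mordell–Weil group, $A$ itself satisfies strong approximation with Brauer–Manin obstruction off $S$ (finiteness of $\sha(A)$), and in fact $A(k)$ is finite, so the fibration has only finitely many rational fibres to control.

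The main work, and the step I expect to be the principal obstacle, is the \emph{fibration argument glueing the fibres to the total space}: given an adelic point $(x_v)\in X(\mathbf{A}_k)^{\mathrm{Br}}$, I must produce an integral point of $X$ close to it at the places outside $S$. I would push forward to get an adelic point of $A$ orthogonal to $\mathrm{Br}(A)$ (using that $\mathrm{Br}(X)\to\mathrm{Br}(X_\eta)$ and the Leray filtration control which classes survive), invoke strong approximation on $A$ to move it to an actual point $a\in A(k)$ — here finiteness of $A(k)$ means one lands on one of finitely many values and the adelic point can be adjusted to lie in a single fibre $X_a(\mathbf{A}_k)$ — then check the resulting adelic point of $X_a$ is still Brauer-orthogonal (compatibility of $\mathrm{Br}(X_a)$ with the restriction of $\mathrm{Br}(X)$, which is where a genuine lemma comparing the two Brauer groups is needed, typically via the vanishing or controllability of $\mathrm{H}^1$ of the base with suitable coefficients, using $A(k)$ finite), and finally apply strong approximation on the fibre $X_a=E\setminus\mathfrak{T}_a$. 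Assembling these—matching the places in $S$ versus outside $S$, and ensuring the Brauer classes used on $A$ and on the fibre together exhaust those on $X$—is the technical heart; the arithmetic inputs (finiteness of $\sha(E\times A)$, hence of $\sha(E)$ and $\sha(A)$, and finiteness of $\mathrm{MW}(A)$) are exactly what make each individual step available.
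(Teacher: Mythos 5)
There are genuine gaps in both directions; the serious one is in the negative direction. Your reduction starts with ``after translating by $-a_0$ I may assume $\mathfrak{T}$ contains a torsion point $(t_0,0)$ with $t_0\in E(k)$ torsion''. This is not justified: the hypothesis only says that the \emph{projection to $A$} of some closed point $m\in\mathfrak{T}$ is $k$-rational; the fibre of $\mathfrak{T}$ over $a_0$ is then a finite nonempty set of torsion \emph{closed} points of $E$ which need not contain any $k$-rational point. So you cannot reduce to \cite[Corollary 8.2]{CLX}, which punctures at the (rational) neutral element. The technical heart of the paper is exactly this missing piece: after fibring over $A$ at $P=a_0$ (Lemmas \ref{gpBr} and \ref{fibration}, where purity in codimension $\geq 2$ identifies $i_P^*\Br(X)$ with the image of $\Br(E)$), one must prove that $E$ minus a nonempty finite set of torsion closed points, \emph{not necessarily rational}, fails strong approximation with respect to the classes coming from $\Br(E)$ (Proposition \ref{ellipticcurve}); this requires redoing the Harari--Voloch limit argument with the sequence $(lQ)_{l\in\Lambda}$ and verifying, place by place and using the closures $M_j$ over $\O_{k_j}$, that the limit adelic point avoids every puncture. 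Your remark that ``removing more torsion points only shrinks the integral points further'' hides the same issue: even when a rational puncture is available, transferring the obstruction to the smaller open set requires checking that the witness adelic point avoids all the remaining punctures at every place, which is precisely that verification and is not automatic.

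In the positive direction your skeleton (push forward to $A$, use Harari's theorem on the base to land on a rational point $P$, then treat the fibre) is the paper's argument (Remark 2.2 in \S\ref{enonce}), but two things are off. First, the fibre computation: if $\pr_A(\mathfrak{T})$ contains no rational point, then for every $a\in A(k)$ \emph{no} point of $\mathfrak{T}$ lies over $a$, so the fibre $X_a$ is all of $E$; the case ``$E$ minus finitely many non-rational torsion points'' never occurs over a rational point of the base. Second, and more importantly, the intermediate statement you invoke for that case (that an elliptic curve punctured at non-rational torsion points satisfies strong approximation with Brauer--Manin obstruction, as a consequence of finiteness of $\sha(E)$) is not a known result and is in direct tension with Proposition \ref{ellipticcurve} of this paper, which shows that such a punctured curve \emph{fails} strong approximation with respect to $\Br(E)$ whether or not the punctures are rational. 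Since the fibre is in fact all of $E$, the positive direction needs only Harari's theorem applied to $A$ (where $A(k)$ is finite, hence discrete) and then to $E$, and your argument is easily repaired once the fibres are computed correctly.
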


Dans \S \ref{enonce}, nous rappelons le contexte concerné et présentons  un énoncé plus précis, dont le Théorème \ref{thmIntro} est une consequence. Dans \S \ref{demonstration}, nous  détaillons sa démonstration.

\section{Énoncé précis du résultat}\label{enonce}
Dans cet article,  le corps de base $k$ est toujours un corps de nombre quelconque.
Pour une place $v$ appartient à   l'ensemble $\Omega$ des places de $k$, on note par $k_{v}$ le complété de $k$. On note par $\infty\subset\Omega$ l'ensemble des places archimédiennes. Pour $v\in\Omega\setminus\infty$, on note par $\O_{k_{v}}$  l'anneau des entiers de $k_{v}$. On note par $\A$ l'anneau des adèles de $k$, et par $\A^{S}$ l'anneau des $S$-adèles si $S\subset\Omega$ est un sous-ensemble fini de places.

Soit $X$ une $k$-variété (schéma séparé de type fini, géométriquement intègre sur $k$) lisse.  L'accouplement de \BM \ est défini par
$$X(\A)\times \Br(X)\To\Q/\Z$$
$$(x_{v})_{v\in\Omega},b\mapsto\sum_{v\in\Omega}\inv_{v}(b(x_{v})),$$
où $\inv_{v}:\Br(k_{v})\xhookrightarrow{}\Q/\Z$ désigne l'invariant local en $v$ provenant de la théorie des corps de classes locaux.
Pour un sous-ensemble $B\subset \Br{X}$, le sous-ensemble $X(\A)^{B}$ des familles de points locaux qui sont orthogonales à tous les éléments appartenant à  $B$ est fermé dans l'espace des points adéliques $X(\A)$. Il contient l'ensemble des points rationnels $X(k)$ d'après la théorie des corps de classes globaux, et il contient ainsi son adhérence $\overline{X(k)}$. 
 
Rappelons que $X$ vérifie \emph{l'approximation forte avec l'\oBM\ par rapport à $B$ hors de $S$} si $X(k)\subset X(\A^{S})$ est dense dans $\pr^S(X(\A)^{B})\subset X(\A^{S})$, où $\pr^S:X(\A)\to X(\A^{S})$ est la projection en oubliant les composantes des places dans $S$. Si $B=\Br(X)$, on dit simplement que $X$ vérifie \emph{l'approximation forte avec l'\oBM\  hors de $S$}.

Dans l'introduction, nous avons énoncé le cas particulier où $A$ et $E$ sont des variétés abéliennes du Corollaire 8.2 de \cite{CLX}. En effet, ce dernier corollaire était énoncé et démontré dans \cite{CLX} pour les variétés semi-abéliennes. Dans le reste de cet article,  nous démontrons  le théorème suivant qui généralise \cite[Corollary 8.2]{CLX} au sens que le fermé $F$ n'est pas forcément de dimension $0$ et que $F$ ne contient même pas nécessairement un point $k$-rationnel.
  
\begin{thm}\label{mainThm}
Soit $k$ un corps de nombres.  Soit $S\supset\infty$ un ensemble fini de places de $k$ contenant les places archimédiennes.
Soient $A$ et $E$ des variétés semi-abéliennes définies sur $k$.
Supposons que  $A$ est de dimension strictement positive et  que $A(k)$ est discret dans $A(\A^{S})$. 
Supposons que $E$ est de dimension $1$ et que $E(k)$ n'est pas discret dans $E(\A^{\infty})$.

Soit $F\subset E\times A$ un fermé de codimension $\geq2$. Supposons que la projection de $F$ sur $A$ contient au moins un point $k$-rationnel $P$ tel que la fibre $F\times_{A}P$, considérée  comme un fermé de Zariski de $E$, ne consiste qu'en  points de torsion (pas forcément $k$-rationnels) de $E$. 

Alors $X=(E\times A)\setminus F$ ne vérifie pas l'approximation forte avec l'\oBM\ hors de $S$.
\end{thm}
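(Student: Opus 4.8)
The strategy is to reduce the general statement to the known obstruction for the punctured abelian variety by restricting everything over the $k$-rational point $P$ of $A$. Fix the $k$-point $P \in A(k)$ whose fiber $F_P := F \times_A P \subset E \times \{P\} \cong E$ consists of torsion points of $E$; write $\mathfrak{t}_1,\dots,\mathfrak{t}_r$ for these points and let $N$ be their common order, so that all $\mathfrak{t}_i$ are defined over a fixed finite extension $k'/k$. The slice $E' := (E \times \{P\}) \setminus F_P$ is a closed subvariety of $X = (E\times A)\setminus F$, and it is itself the complement of a finite set of torsion points in the elliptic curve $E$. The heart of the argument in \cite[Corollary 8.2]{CLX} — the Harari–Voloch / Poonen descent argument — produces, for such an $E'$, an element of $\Br(E')$ (or a family of them) that cuts out a proper closed-then-reopened subset of $E'(\A^S)$ containing $\overline{E'(k)}$, thereby defeating strong approximation off $S$. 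So I would first reconstruct that obstruction on the slice $E'$.

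The key step is then to \emph{lift} the Brauer obstruction from the slice $E' \subset X$ to all of $X$, or equivalently to show that a failure of strong approximation on $E'$ propagates to $X$. Here is where I would use the hypothesis that $A(k)$ is discrete in $A(\A^S)$: this forces the image of $X(k)$ in $A(\A^S)$, under the projection $\pr_A$, to land in the discrete set $A(k) \hookrightarrow A(\A^S)$, so for any adelic point of $X$ whose $A$-component adelically approximates the constant adele $(P)_v$ closely enough (away from $S$), the only rational points of $X$ near it must have $A$-coordinate \emph{equal} to $P$ — i.e. must lie on the slice $E' \times \{P\}$. Concretely: choose an adelic point $(x_v) \in X(\A)^{\Br}$ whose $E$-component is an adelic point of $E'$ not in the closure of $E'(k)$ inside $E'(\A^S)$ (this exists by the slice analysis), and whose $A$-component is the constant adele $(P)_v$. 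I must check that $(x_v)$ is orthogonal to all of $\Br(X)$; this should follow by pulling back Brauer classes along the closed immersion $E' \hookrightarrow X$ together with the fact that $(P)_v$ is a global point of $A$, so the $A$-contribution to every Brauer–Manin pairing is controlled. Then any $y \in X(k)$ that $S$-adelically approximates $(x_v)$ well enough must, by discreteness of $A(k)$, satisfy $\pr_A(y) = P$, hence $y \in E'(k)$, contradicting that $(x_v)_E \notin \overline{E'(k)}$.

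The main obstacle I anticipate is the Brauer-group bookkeeping in that last paragraph: one needs that the obstructing class(es) on $E'$ either extend to $X$ or can be complemented by classes on $X$ so that the chosen adelic point genuinely lies in $X(\A)^{\Br(X)}$ — it is not automatic that a class in $\Br(E')$ extends over $X$, since $F$ has codimension $\ge 2$ but the complement structure of $E'$ inside $X$ is delicate. The likely fix is to work not with a single class but with the observation that $X(\A)^{\Br(X)}$ surjects onto (a suitable subset of) $E'(\A)^{\Br(E')}$ under an appropriate section of $\pr_A$ at the point $P$, using purity / the fact that $\Br(E\times A) \to \Br(X)$ is an isomorphism when $F$ has codimension $\ge 2$ and $E \times A$ is smooth, so that Brauer classes on $X$ are pulled back from $E\times A$ and their restriction to the slice factors through $\Br(E)$, which is small and well-understood. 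A secondary technical point is that the original \cite{CLX} argument is stated for $\mathfrak{T}$ a set of torsion points that need not be $k$-rational and not necessarily containing the origin; I would need to confirm that the descent argument there only uses torsion-ness (via multiplication-by-$N$ on $E$ and the resulting isogeny/covering) and the non-discreteness of $E(k)$ in $E(\A^\infty)$, both of which are in force here, and that passing from $S = \infty$ to a general finite $S \supset \infty$ is harmless because enlarging $S$ only makes strong approximation easier to obstruct.
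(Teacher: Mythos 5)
Your proposal follows essentially the same route as the paper: slice over the rational point $P$, use the discreteness of $A(k)$ in $A(\A^{S})$ to reduce strong approximation on $X$ to strong approximation on the fiber $X\times_{A}P=E\setminus F_{P}$ relative to $i_{P}^{*}(\Br(X))$, identify that Brauer subgroup with the image of $\Br(E)$ via purity on $E\times A$ (codimension $\geq 2$), and invoke the Harari\textendash Voloch/CLX descent on the punctured curve. The only caveat is that the step you defer to a ``confirmation'' --- extending the descent to several, possibly non-rational, torsion points --- is exactly where the paper's new technical work lies (its Proposition \ref{ellipticcurve}), though your assessment that only torsion-ness and the non-discreteness of $E(k)$ in $E(\A^{\infty})$ are needed is borne out there.
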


Lorsque $E$ est une courbe elliptique, sur laquelle l'hypothèse est équivalente à la condition que  son rang de Mordell\textendash Weil est non nul. La première conclusion du Théorème \ref{thmIntro} en découle directement. Expliquons la seconde conclusion comme suit.

\begin{rem} 
En supposant la finitude des groupes de Tate\textendash Shafarevich $\sha(A^{\textup{ab}})$ du quotient abélien $A^{\textup{ab}}$ de $A$ et  $\sha(E)$ quand $E$ est une courbe elliptique, l'hypothèse partielle du théorème \ref{mainThm} que la projection de $F$ sur $A$ contient au moins un point $k$-rationnel  est nécessaire. Si la projection de $F$ ne contient aucun point $k$-rationnel, alors $X$ vérifie l'approximation forte avec l'\oBM\ hors de $S$. En effet, il suffit de considérer l'hypothèse la plus faible et la conclusion la plus forte, disons $S=\infty$. Ceci résulte d'un argument  simple de fibration:
si $(x_{v})_{v\in\Omega}$ est orthogonale à $\Br(X)$, sa projection sur $A$ est alors orthogonale à $\Br(A)$, cela entraîne que cette projection (en oubliant les composantes archimédiennes) provient d'un point $k$-rationnel $P$ d'après un résultat de D. Harari \cite[Théorème 4]{Harari08}
car $A(k)$ est supposé discret dans $A(\A^{\infty})$. Comme la projection de $F$ ne contient aucun $k$-point, la fibre $X\times_{A}P$ est identique à $E$. Elle contient donc $(x_{v})_{v\in\Omega}\bot\Br(E)$ quitte à modifier les composantes archimédiennes si nécessaire. La famille de points locaux $(x_{v})_{v\in\Omega\setminus\infty}$ peut être approximée  par un point global d'après \cite[Théorème 4]{Harari08}.

\end{rem}

\section{Démonstrations du énoncé}\label{demonstration}

On démontre le Théorème \ref{mainThm} dans cette section. Tout d'abord, on compare certains groupes de Brauer dans le Lemme \ref{gpBr}. Le Théorème \ref{mainThm} est une conséquence directe des Lemmes \ref{gpBr}, \ref{fibration} et de la Proposition  \ref{ellipticcurve}. À la fin, on démontre la Proposition \ref{ellipticcurve}. 

\begin{lem}\label{gpBr}
Soient $A,$ $E,$ $X,$ $F,$ et $P$ comme dans le théorème \ref{mainThm}. Le point $P\in A(k)$ induit une immersion fermée $i_{P}:X\times_{A}P\to X$. Si on identifie $X\times_{A}P$  avec l'ouvert $E\setminus (F\times_{A}P)$ de $E$, alors $$\im[i_{P}^{*}:\Br(X)\to\Br(X\times_{A}P)]=\im[\Br(E)\to\Br(X\times_{A}P)].$$
\end{lem}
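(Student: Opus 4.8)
The plan is to compare the three Brauer groups $\Br(X)$, $\Br(E)$, and $\Br(X\times_A P)$ via the natural restriction maps, using the fibration structure $X\hookrightarrow E\times A$ together with the fact that $A$ is (semi-)abelian and $P\in A(k)$. Write $U=X\times_A P=E\setminus(F\times_A P)$, an open subset of $E$ whose complement consists of torsion points. Since $F$ has codimension $\geq 2$ in $E\times A$, the fibre $F\times_A P$ is a proper closed subset of $E$, hence finite (a finite set of torsion points, by hypothesis). There are two inclusions of the image we want to compare: one coming from $i_P^*:\Br(X)\to\Br(U)$, the other from the composite $\Br(E)\to\Br(U)$ (restriction to the open subset $U\subset E$), and I claim both have the same image.

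**The easy inclusion and the key reduction.** First I would establish $\im[\Br(E)\to\Br(U)]\subseteq\im[i_P^*:\Br(X)\to\Br(U)]$. The point $P$ gives a section-like closed immersion, but more useful is the projection $\pr_E\colon E\times A\to E$; restricting it to $X$ gives a morphism $X\to E$ (since $X$ is open in $E\times A$), and I would check that this morphism factors the inclusion $U\hookrightarrow E$ appropriately, namely that $\pr_E\circ i_P\colon U\to E$ is just the open immersion. Hence for $\beta\in\Br(E)$ we get $i_P^*(\pr_E^*\beta)=\beta|_U$, and $\pr_E^*\beta\in\Br(X)$, giving the inclusion. For the reverse inclusion $\im[i_P^*]\subseteq\im[\Br(E)\to\Br(U)]$, the heart of the matter is to show that every class in $\Br(X)$, after restriction to the fibre $U$ over $P$, comes from $\Br(E)$. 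Here I would use the purity/Gysin exact sequence for the open immersion $X\hookrightarrow E\times A$ (codimension $\geq 2$ complement): $\Br(E\times A)\xrightarrow{\sim}\Br(X)$ is an isomorphism when $F$ has codimension $\geq 2$ and $E\times A$ is smooth, so it suffices to understand $\im[\Br(E\times A)\to\Br(U)]$, i.e. restrict first to $E\times A$ and then to the fibre $E\times\{P\}\cong E$ and then to $U$. Since restriction along $i_P'\colon E\times\{P\}\hookrightarrow E\times A$ followed by $U\hookrightarrow E$ is the same as $\Br(E\times A)\to\Br(E)\to\Br(U)$, the problem reduces to: the composite $\Br(E\times A)\to\Br(E)$ (restriction to the fibre over $P$) has image inside $\im[\Br(E)\to\Br(E)]=\Br(E)$ — which is trivial — but what we actually need is that $i_P^*$ on $\Br(X)$ lands in the image of the \emph{whole} $\Br(E)$; so really the statement is just that $\Br(X)\to\Br(U)$ factors through $\Br(E)\to\Br(U)$ via $\Br(E\times A)\to\Br(E)$.

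**Executing the reduction via the Künneth-type decomposition.** So the real content is the identification $\im[\Br(E\times A)\to\Br(E)]=\Br(E)$, where the map is restriction to $E\times\{P\}$; but this is obvious because $\pr_E^*\colon\Br(E)\to\Br(E\times A)$ is a section of it. Wait — that only shows surjectivity of $\Br(E\times A)\to\Br(E)$, which combined with the above gives exactly that $\im[\Br(E\times A)\to\Br(U)]=\im[\Br(E)\to\Br(U)]$, and then purity transports this to $\Br(X)$. Concretely the chain I would write is:
\begin{align*}
\im[i_P^*\colon\Br(X)\to\Br(U)]
&=\im[\Br(E\times A)\to\Br(E\times\{P\})\to\Br(U)]\\
&=\im[\Br(E\times A)\to\Br(E)\to\Br(U)]\\
&=\im[\Br(E)\to\Br(U)],
\end{align*}
where the first equality uses that $\Br(E\times A)\to\Br(X)$ is an isomorphism (purity, $\codim F\geq 2$, smoothness) and that $i_P$ factors through the fibre inclusion, the second is the identification of the fibre with $E$, and the third uses the splitting $\pr_E^*$.

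**Main obstacle.** The step I expect to require the most care is the application of purity to get $\Br(E\times A)\xrightarrow{\sim}\Br(X)$: one needs $E\times A$ to be smooth (it is, as a product of smooth group varieties over $k$) and $F$ of codimension $\geq 2$ with the cohomological purity statement valid for the Brauer group over a general (not necessarily perfect-residue-field) base — this is standard over a field of characteristic $0$, so over the number field $k$ it is fine, but I would cite it precisely (e.g. the absolute purity theorem, Gabber, or the classical statement that removing a closed subset of codimension $\geq 2$ from a smooth variety does not change the Brauer group). The only other subtlety is purely bookkeeping: checking that $i_P\colon U\to X$ really is the composite of $U\cong E\times\{P\}\cap X$ with the fibre inclusion into $E\times A$ restricted to $X$, and that $\pr_E\circ i_P$ is the open immersion $U\hookrightarrow E$ — routine but worth spelling out since the whole comparison hinges on it.
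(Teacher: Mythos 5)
Your proof is correct and takes essentially the same route as the paper's: both rest on the commutative square relating $\Br(E\times A)$, $\Br(E)$, $\Br(X)$ and $\Br(X\times_{A}P)$, the purity isomorphism $\Br(E\times A)\simeq\Br(X)$ coming from $\codim F\geq 2$, and the surjectivity of the restriction $\Br(E\times A)\to\Br(E)$ (split by $\pr_{E}^{*}$). The paper states this more tersely as a single commutative diagram, but the content is identical.
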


\begin{proof}
Considérons le diagramme commutatif suivant
\SelectTips{eu}{12}$$\xymatrix@C=20pt @R=14pt{
\Br_{}(E\times A)\ar[r]\ar[d]&\Br_{}(E)\ar[d]
\\  \Br(X)\ar[r]^-{i_{P}^{*}}&\Br_{}(X\times_{A}P),
}$$ 
où les flèches verticales sont induites par les immersions ouvertes et 
où la flèche horizontale en haut est induite par la section associée au point rationnel $P\in A(k)$. Comme $F=(E\times A)\setminus X$ est de codimension $\geq2$, la flèche à gauche est un isomorphisme d'après le théorème de pureté pour les groupes de Brauer. L'égalité voulue résulte alors de la surjectivité de la flèche en haut.
\end{proof}

\begin{lem}\label{fibration}
Soit $k$ un corps de nombres. Soit $S$ un ensemble fini de places de $k$.
Soit $f:X\to A$ un morphisme entre de variétés algébriques définies sur un corps de nombres $k$. 
Supposons que $A(k)\subset A(\A^{S})$ est discret.
Soit $P\in A(k)$ un point rationnel de $A$ tel que la fibre $X_{P}=X\times_{A}P\neq\emptyset$.

Si $X$ vérifie l'approximation forte avec \oBM\ hors de $S$, alors $X_{P}$ vérifie l'approximation forte avec \oBM\ par rapport à $i_{P}^{*}(\Br(X))$ hors de $S$.
\end{lem}

\begin{proof} C'est un argument standard de fibration.
Il existe un ouvert $U$ de $A(\A^{S})$ tel que $U\cap A(k)=\{P\}$. 
Soit $V_{P}\subset X_{P}(\A^{S})$ un ouvert tel que 
$$[V_{P}\times\prod_{v\in S}X_{P}(k_{v})]\cap X_{P}(\A)^{i^{*}_{P}(\Br(X))}\neq\emptyset.$$
L'ouvert $V_{P}$ est alors la restriction d'un ouvert $V\subset f^{-1}(U)\subset X(\A^{S})$ à la fibre $X_{P}$. D'après la fonctorialité de l'accouplement de \BM, 
$$[V\times\prod_{v\in S}X(k_{v})]\cap X(\A)^{\Br(X)}\neq\emptyset.$$
Comme $X$ vérifie l'approximation forte avec \oBM\  hors de $S$, l'ensemble des points rationnels $X(k)$ intersecte avec $V$. 
Si $Q\in X(k)\cap V$, alors $f(Q)=P$ et $Q\in X_{P}(k)\cap V_{P}$. Donc $X_{P}$ vérifie l'approximation forte par rapport à $i_{P}^{*}(\Br(X))$ hors de $S$.
\end{proof}

Le  Théorème \ref{mainThm} résulte directement des lemmes précédents et de la proposition suivante. 

\begin{prop}\label{ellipticcurve}
Soit $k$ un corps de nombres.  Soit $S\supset\infty$ un ensemble fini de places de $k$ contenant les places archimédiennes.
Soit $E$ une variété semi-abélienne de dimension $1$ définie sur un corps de nombres $k$. Supposons que $E(k)$ n'est pas discret dans $E(\A^{\infty})$.
Soit $\mathfrak{M}=\{m_{1}, m_{2}, \ldots, m_{s}\}\subset E$ un sous-ensemble fini non-vide de points de torsion de $E$. 

Si on note par $E_{0}$ le complémentaire de $\mathfrak{M}$ dans $E$, alors $E_{0}$ ne vérifie pas l'approximation forte avec l'\oBM\ par rapport à $\Br(E)$ hors de $S$.
\end{prop}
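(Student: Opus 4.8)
The plan is to exhibit an adelic point of $E_0$ that is orthogonal to $\Br(E)$ but whose image in $E_0(\A^S)$ is not the image of any rational point; since $E_0(k)$ always lies in $\pr^S(E_0(\A)^{\Br(E)})$, this shows that $E_0(k)$ is not dense there, i.e. that strong approximation with \oBM\ with respect to $\Br(E)$ off $S$ fails. Note first that the hypothesis forces $E$ to be an elliptic curve of positive Mordell\textendash Weil rank: a one\textendash dimensional torus has discrete group of rational points inside its adelic points, and for an elliptic curve non\textendash discreteness of $E(k)$ in $E(\A^{\infty})$ is equivalent to rank $\geq 1$ (if the rank is $0$ then $E(k)$ is finite, and if $P\in E(k)$ has infinite order then $n!\,P\to 0$ at every finite place).

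The first step is to record the Diophantine input. Since $E$ is an elliptic curve and $\mathfrak{M}\neq\emptyset$, the complement $E_0$ is a smooth hyperbolic affine curve (of genus $1$ with $\geq 1$ puncture), so by Siegel's theorem the set $\mathcal{E}_0(\O_{k,T})$ is finite for every finite set of places $T\supseteq S$ and every model $\mathcal{E}_0$ of $E_0$ over $\O_{k,T}$. Because every open subset of $E_0(\A^S)$ imposes integrality outside a finite set of places, it follows that $E_0(k)$ is \emph{closed} (in fact discrete) in $E_0(\A^S)$: a sequence of rational points converging in $E_0(\A^S)$ has its bad\textendash reduction loci uniformly bounded, hence eventually lies in a fixed finite set, hence is eventually constant. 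Consequently strong approximation with \oBM\ with respect to $\Br(E)$ off $S$ holds if and only if $\pr^S(E_0(\A)^{\Br(E)})=E_0(k)$, and it suffices to produce a single adelic point violating this equality.

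To construct such a point I would follow and generalize the argument of Harari\textendash Voloch \cite{HV10} together with the idea of Poonen \cite{Poonen}. Fix $P\in E(k)$ of infinite order; it specializes outside $\mathfrak{M}$ at all places outside some finite $T_0\supseteq S$. The goal is to build pairwise distinct rational points $Q_i\in E(k)$ converging, \emph{in the topology of $E(\A^S)$} (which is coarser than that of $E_0(\A^S)$), to a point $z=(z_v)_v$ which is \emph{not rational}, while the loci $T(Q_i):=\{v : \overline{Q_i}_v\in\overline{\mathfrak{M}}\}$ where $Q_i$ specializes onto $\mathfrak{M}$ fail to accumulate, i.e. $\limsup_i T(Q_i)$ is finite. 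Concretely one seeks $z$ as a suitable element of the closure $\overline{E(k)}$ in $E(\A)$ — for instance a limit of multiples $n_i P$ with $n_i$ converging to a well\textendash chosen $\widehat{\mathbb{Z}}$\textendash element, generic enough that $z_v\neq m_j$ for all $v,j$. Granting such a $z$: it specializes outside $\mathfrak{M}$ at all but finitely many places, so after adjusting the archimedean components within their connected components (which leaves the Brauer\textendash Manin pairing unchanged) it lies in $E_0(\A)$; as a limit of rational points of $E$ it lies in the closed set $E(\A)^{\Br(E)}$, which contains $E(k)$ — here $\Br(E)=\Br_1(E)$ since $\Br(\overline{E})=0$ — so $z\in E_0(\A)^{\Br(E)}$; and $\pr^S(z)$ is not the image of a rational point, because the loci $T(Q_i)$ are unbounded so the $Q_i$ do not converge to $z$ in the finer topology of $E_0(\A^S)$, and by Siegel the rational points that \emph{do} converge to $\pr^S(z)$ finely would be eventually constant, contradicting non\textendash rationality of $z$.

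The main obstacle is precisely the existence of such a sequence: one must control the specialization loci $T(Q_i)$, equivalently understand how often $\mathrm{ord}(\overline{P}_v)$ divides the chosen integers $n_i$, or more intrinsically analyze the closure $\overline{E(k)}$ inside $E(\A^S)$ and the growth of the orders $\mathrm{ord}(\overline{P}_v)$. This is where the hypothesis that $E(k)$ is non\textendash discrete in $E(\A^{\infty})$ is used in an essential way, and where the generalization beyond \cite{CLX} (in which $\mathfrak{M}$ was a single rational point) requires the extra care of handling an arbitrary, possibly irrational, finite set of torsion points; primitive\textendash divisor and height estimates on the integers $\mathrm{ord}(\overline{P}_v)$ are the expected technical device.
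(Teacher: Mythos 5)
Votre stratégie globale est bien celle du papier : par Siegel, $E_0(k)$ est discret dans $E_0(\A^S)$, donc il suffit d'exhiber un point de $E_0(\A)^{\Br(E)}$ dont la troncation hors de $S$ n'est l'image d'aucun point rationnel, et un tel point doit être cherché comme limite de multiples d'un point $Q$ d'ordre infini. Mais vous reconnaissez vous-même que « the main obstacle is precisely the existence of such a sequence » : or c'est là tout le contenu technique de la démonstration, et votre proposition ne le fournit pas. Le dispositif du papier est le suivant : on choisit une place $v_0\notin T$ de bonne réduction séparant $Q$ des autres points $T$-entiers $P_2,\dots,P_t$ de $\E_0$, puis, par le théorème de Dirichlet, un ensemble infini $\Lambda$ de premiers $l\equiv a \bmod qn|\E(\F_{v_0})|$ où $n$ annule tous les $m_j$ et $a$ est choisi premier à $qn|\E(\F_{v_0})|$ avec $a\equiv 1 \bmod n|\E(\F_{v_0})|$. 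Une valeur d'adhérence $(x_v)$ de $(lQ)_{l\in\Lambda}$ fait alors tout à la fois : (i) comme l'ordre de $m_j$ divise $n\mid l-1$, on a $lm_{j,w}=m_{j,w}$, ce qui permet de montrer (par réduction modulo $\pi_w^r$, en traitant séparément les places où un point du support de $m_{j,k_v}$ a corps résiduel $k_v$) que $x_v$ évite les points épointés en toute place et reste $v$-entier hors de $T$ ; (ii) $lQ\equiv Q \bmod v_0$ sépare $(x_v)_{v\notin S}$ de $P_2,\dots,P_t$ ; (iii) la constance de $\textup{val}_q(l-1)$, jointe au théorème de Mattuck ($E(k_{v_0})$ contient un sous-groupe d'indice fini isomorphe à $(\O_{k_{v_0}},+)$), exclut $(x_v)_{v\notin S}=Q$, donc la limite n'est pas rationnelle. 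Rien de cela ne figure dans votre texte : vous postulez un $z$ non rationnel « generic enough » sans justifier ni sa non-rationalité ni l'évitement des $m_j$ (qui sont des points fermés non nécessairement rationnels, ce qui demande un soin particulier), et la piste « primitive-divisor and height estimates » n'est pas celle qui aboutit ici. En l'état, la proposition est un plan, non une preuve.

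Par ailleurs, votre réduction au cas elliptique repose sur une affirmation fausse : un tore de dimension $1$ n'a pas nécessairement $E(k)$ discret dans $E(\A^{\infty})$. Pour $\mathbb{G}_m$ sur un corps quadratique réel, le groupe des unités $\O_k^{*}$ est infini et s'envoie dans le groupe compact $\prod_{v\nmid\infty}\O_{k_v}^{*}$, où un sous-groupe discret serait fini ; la discrétion que vous invoquez (formule du produit) ne vaut que dans les idèles complets, archimédiennes comprises. Le papier traite explicitement ce cas en remplaçant le modèle de Néron par la composante neutre du lft-modèle de Néron et le théorème de Mordell\textendash Weil par le théorème des unités généralisé ; ce cas n'est donc pas vide et doit être couvert.
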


\begin{proof} La preuve généralise l'argument de Harari et Voloch \cite{HV10} sur la courbe elliptique $y^{2}=x^{3}+3$ définie sur $\Q$. Une partie   de l'argument suivant a été établie dans \cite[Lemma 6.6, Theorem 8.1]{CLX} pour traiter le cas où $\mathfrak{M}$   consiste en un seul point rationnel et pour l'énoncé sur un corps de nombres $k$ quelconque.  La nouveauté ici est que $\mathfrak{M}$ peut contient plusieurs de points fermés non nécessairement rationnels.

D'abord, on démontre la proposition pour le cas où $E$ est une courbe elliptique. Lorsque $E$ est un tore de dimension $1$, la preuve est essentiellement la même. On explique à la fin les modifications nécessaires pour le cas de tores.

À partir de maintenant, supposons que $E$ est une courbe elliptique. Soit $\E$ son modèle de Néron sur l'anneau des entiers $\O_{k}$.
Pour tout $j\in\mathbb{N}$, $1\leq j\leq s$, soit $M_{j}$ l'adhérence de Zariski de $m_{j}$ dans $\E$. La normalisation de $M_{j}$ est $\Spec(\O_{k_{j}})$, où $\O_{k_{j}}$ est l'anneau des entiers du corps résiduel $k_{j}$ de $m_{j}$. Soit $\E_{0}$ le complémentaire dans $\E$ de la réunion de $M_{j}$. Soit $n\in\mathbb{N}$ tel que $nm_{j}=O\in E(k_{j})$ pour tout $j$.

D'après l'hypothèse, $E$ est de rang de Mordell\textendash Weil strictement positif. Fixons un point rationnel $Q\in E(k)$ d'ordre infini. Il définit une section de $\E$ notée encore par $Q\in\E(\O_{k})$.
Les sous-ensembles suivants de $\Omega$  sont alors finis.

\begin{equation*}
    \begin{array}{rcl}
       T_{1} & = &  \{ v\in \Omega\setminus \infty :   Q\text{ intersecte avec }M_{j}\text{ au-dessus de } v\text{ pour un certain }j\} \\
        T_{2}          & = & \{ v\in \Omega\setminus \infty : \Spec(\O_{k_{j}})\to\Spec(\O_{k}) \text{ est ramifié au-dessus} \\ 
        && \hspace{198pt} \text{de }v\text{ pour un certain }j\}\\
        T_{3} & = & \{ v\in \Omega\setminus \infty : \Spec(\O_{k_{j}})\to M_{j}\text{ admet une fibre non triviale} \\ 
        && \hspace{154pt}\text{au-dessus de }v\text{ pour un certain }j\}\\
            T & = & T_{1}\cup T_{2}\cup T_{3}\cup S
    \end{array}
\end{equation*}
D'après le théorème de Seigel, l'ensemble des points $T$-entiers $$\E_{0}(\O_T)=\{P_1=Q,P_2,\cdots, P_t\}$$ est  fini.

Choisissons une place non-dyadique $v_{0}\in\Omega\setminus T$ telle que $Q\not\equiv P_i \mod v_0$ pour $2\leq i\leq t$ et telle que $E$ admet une bonne réduction en $v_{0}$. Soit $q$ la caractéristique du corps résiduel $\F_{v_{0}}$ de $\O_{k}$ en $v_{0}$. On trouve 
 $$\text{soit}\ \ {\rm pgcd} (n|\E(\F_{v_0})|+1, q)=1, \ \ \ \text{soit} \ \ \  {\rm pgcd} (n|\E(\F_{v_0})|-1, q)=1 . $$  
Définissons 
 $$ a_{}=\begin{cases} n|\E(\F_{v_0})|+1, \ \ \ & \text{si ${\rm pgcd}(n|\E(\F_{v_0})|+1, q)=1$;} \\
(q-1)n|\E(\F_{v_0})|+1, \ \ \ & \text{si ${\rm pgcd}(n|\E(\F_{v_0})|-1, q)=1$.} \end{cases} $$
Alors $a$ est toujours premier avec $qn|\E(\F_{v_0})|$. D'après le théorème de la progression arithmétique  de Dirichlet,
soit $\Lambda$ l'ensemble infini des nombres premiers $l$ qui vérifient 
$$l \equiv a_{}  \mod qn|\E(\F_{v_0})| .$$
Donc $n|\E(\F_{v_0})|$ divise $l-1$ et la valuation $\textup{val}_q(l-1)=\textup{val}_q(n|\E(\F_{v_0})|)$ est une constante pour tout $l\in \Lambda$.

Pour $v\in \infty$, désignons $\pi_0(E(k_v))$ le groupe des composantes connexes du groupe de Lie  $E(k_v)$.
Considérons la suite $( lQ )_{l\in\Lambda}$ plongée dans l'espace compact 
$$\prod_{v\in \infty} \pi_0(E(k_v)) \times \prod_{v\notin\infty}\E(\O_{k_{v}}). $$ Il existe alors une sous-suite convergente vers un élément noté par   $(x_v)_{v\in\Omega}$ qui est ainsi orthogonal  à $\Br(E)$ d'après la continuité de l'accouplement de \BM.

Pour toute $v\in\infty$, le composé $E_0(k_v)\to E(k_v)\to\pi_{0}(E(k_{v}))$ est surjectif. Quitte à modifier $x_v$ si nécessaire, on peut supposer que $x_v\in E_0(k_v)$ pour toute place archimédienne sans affecter l'orthogonalité avec le groupe de Brauer.

Pour toute  $v\notin \infty$, démontrons que $x_v \in E_0(k_v)$. En effet, il suffit de considérer le cas où il existe une place $w$ de $k_{j}$ au-dessus de $v$ telle que l'extension $k_{j,w}/k_{v}$ est triviale, sinon le corps résiduel de chaque  point du support du 0-cycle  $m_{j,k_{v}}=m_{j}\times_{\Spec(k)}\Spec(k_{v})$ contient strictement  $k_{v}$ ainsi que $x_v$ n'est jamais contenu dans $m_{j,k_{v}}$. Soit $w$ une telle place et soit  $m_{j,w}$ l'image de $m_{j}$  par $E(k_{j})\to E(k_{j,w})$.  Comme $n$ divise $l-1$, on trouve que $lm_{j,w}=m_{j,w}$ dans $E(k_{j,w})$, ou bien $lM_{j,w}=M_{j,w}$ dans $\E(\O_{k_{j,w}})$ où $M_{j,w}=M_{j}\times_{\Spec(\O_{k})}\Spec(\O_{k_{j,w}})$.
Puisque $Q$ n'est pas de torsion $Q\neq m_{j,w}\in E(k_{j,w})$,  il existe un entier $r$ strictement positif tel que les réductions de $Q\in\E(\O_{k})$ et de $M_{j,w}\in\E(\O_{k_{j,w}})$ sont différentes dans $\E(\O_{k_{j,w}}/(\pi_{w}^{r}))$ où $\pi_{w}$  est une uniformisante de $k_{j,w}$. Si la limite $x_v$ de $lQ$ est égale à $m_{j,w}$, il existe alors un nombre infini de premiers $l$ tel que $lQ$ co\"incide avec $M_{j,w}=lM_{j,w}$ dans $\E(\O_{k_{j,w}}/(\pi_{w}^{r}))$. Cela contredit au fait que les réductions de $Q$  et de $M_{j,w}$ sont différentes  dans $\E(\O_{k_{j,w}}/(\pi_{w}^{r}))$.

Pour toute $v\not \in T$, démontrons que $x_v\in\E_{0}(\O_{k_{v}})$, autrement dit la réduction  $\bar{x}_{v}$ de $x_v$ modulo $v$ ne se trouve pas dans  $\bigcup_{j=1}^{s}M_{j}$. D'après la choix de $T$, la réduction $M_{j}\times_{\Spec(\O_{k})}\Spec(\F_{v})$ est une sous-schéma fermé réduit de $\E_{v}=\E\times_{\Spec(\O_{k})}\Spec(\F_{v})$. Si elle contient $\bar{x}_{v}$, il existe alors une place $w$ de $k_{j}$ non-ramifiée et de degré $1$ au-dessus de $v$. La réduction $\bar{M}_{j,w}\in\E_{v}(\F_{w})$ de $M_{j,w}\mod w$ co\"incide  avec $\bar{x}_{v}$.  
Comme $n$ divise  $l-1$ et l'ordre de $m_{i}$ divise $n$, on trouve que   $m_{j}=lm_{j}\in E(k_{j})$, d'où $\bar{M}_{j,w}=l\bar{M}_{j,w}\in\E_{v}(\F_{w})$. Pour $l$ suffisamment grand, la réduction $l\bar{Q}_{v}$ de $lQ\mod v$  est égale à $\bar{x}_{v}=\bar{M}_{j,w}=l\bar{M}_{j,w}\in\E_{v}(\F_{w})$. Ceci entraîne que  $l\bar{Q}_{v}=l\bar{M}_{j,w}\in\E_{v}(\F_{w})$ pour un nombre infini de premiers $l$, d'où  $\bar{Q}_{v}=\bar{M}_{j,w}\in\E_{v}(\F_{w})$ qui contredit le fait que $Q$ et $M_{j}$ n'intersectent pas en dehors de $T$. Par conséquent $x_v\in\E_{0}(\O_{k_{v}})$ pour toute $v\notin T$.

Nous concluons que 
$$(x_v)_{v\in \Omega}\in [(\prod_{v\in T}E_{0}(k_{v}))\times(\prod_{v\notin T}\E_{0}(\O_{k_{v}}))]^{\Br(E)} . $$ 

Afin de compléter la preuve, il reste à démontrer que  pour tout $S\supset\infty$
la troncation
$$(x_v)_{v\not\in S}\in \textup{pr}^{S} \left([(\prod_{v\in T}E_{0}(k_{v}))\times(\prod_{v\notin T}\E_{0}(\O_{k_{v}}))]^{\Br(E)}\right) $$ 
ne se trouve pas  dans l'adhérence de $E_0(k)\subset E_0(\A^S)$.
Comme 
$$\displaystyle E_0(k)\cap[(\prod_{v\in T\setminus S}E_0(k_v))\times(\prod_{v\notin T}\E_0(\O_{k_v}))]=\E_0(\O_T)=\{P_1=Q,P_2,\cdots, P_t\}$$ est fini, $E_0(k)$ est discret dans $E_0(\A^S)$. 
Il suffit de démontrer que $(x_v)_{v\notin S}$ n'est l'image d'aucun des points $P_i$.
Observons que $l Q\equiv Q  \mod v_0$ car $|\E(\F_{v_0})|$ divise $l-1$.
Pour $2\leq i\leq t$, on trouve que $Q \not\equiv P_i \mod v_0$ d'après la choix de $v_{0}$, et ainsi que $x_{v_{0}} \not\equiv P_i \mod v_0$. Donc  $(x_v)_{v\notin S}$  n'est pas l'image de $P_i$ pour $2\leq i\leq t$.
Par l'absurde, supposons que $(x_v)_{v\notin S}=P_1=Q$. Alors une sous-suite de $(lQ)_{l\in\Lambda}$ converge vers $Q$ dans $E(k_{v_{0}})$,  disons $(l-1)Q\to O$ pour les premiers $l$ apparus comme indices de la sous-suite convergente. De l'autre côté,  $E(k_{v_0})$ contient un sous-groupe d'indice finie qui est isomorphe à $(\O_{k_{v_0}},+)$ comme groupes topologiques d'après  \cite[Theorem 7]{Mattuck}. Comme $Q$ n'est pas un point de torsion, si on note cette dernière indice par $N$, l'élément $Q'=NQ\in E(k_{v_{0}})$ est alors non nul. En plus, $Q'$ se trouve dans le sous-groupe qui peut être identifié avec $(\O_{k_{v_{0}}},+)$. La convergence $(l-1)Q'=(l-1)NQ\to O$ implique que $l\to 1$ dans $\O_{k_{v_0}}$ qui contredit le fait que la valuation $\textup{val}_q(l-1)=\textup{val}_q(n|\E(\F_{v_0})|)$ est une constante pour tout $l\in\Lambda$.

Enfin,  expliquons l'adaptation nécessaire lorsque $E$ est un tore de dimension $1$ au lieu d'une courbe elliptique.
Dans la preuve, nous avons besoin d'un $\O_k$-modèle $\E$ qui est un schéma en groupe de type fini et lisse sur $\O_k$, pour un tore nous prenons la composante connexe de l'identité du lft-modèle de Néron, cf. \cite[Theorem 5.12]{Neronmodel}. 
D'après le théorème de Dirichlet généralisé  \cite[Theorem 5.12]{AlgGpNT}, le groupe des unités $\E(\O_k)$ est un groupe abélien de type fini. Il est de rang strictement positif car $E(k)\subset E(\A^\infty)$ n'est pas discret. Nous fixons  $Q\in\E(\O_k)$ un point d'ordre infini.
 Un point de torsion $m_j\in\mathfrak{M}$ s'étend en une section $M_j\in\E(\O_k)$. Tout  reste de la preuve fonctionne dans ce contexte.
\end{proof}

\footnotesize
%\noindent\textbf{Remerciements.}  L'auteur tient à remercier J.-L. Colliot-Thélène  pour ses suggestions.
%   funding 

\normalsize
%\nocite{*} %%show all the bib
\bibliographystyle{alpha}
\bibliography{mybib1}

\end{document}